\tikzstyle{process} = [rectangle, minimum width=9cm, minimum
\tikzstyle{arrow} = [thick,->,>=stealth]
\newtheorem{theorem}{Theorem}[section]
\newtheorem{lemma}[theorem]{Lemma}
\def\Z{\mathbb{Z}}
\newcommand{\Aut}{\mathrm{Aut}}
\begin{document}

\title{Quasi-symmetric designs on $56$ points}

\author{Vedran Kr\v{c}adinac}

\email{vedran.krcadinac@math.hr}

\author{Renata Vlahovi\'{c} Kruc}

\email{renata.vlahovic@math.hr}

\address{Department of Mathematics, Faculty of Science, University of Zagreb,
Bijeni\v{c}ka~30, HR-10000 Zagreb, Croatia}

\thanks{This  work  has  been  fully  supported by
the Croatian Science Foundation under the project $6732$.}

\subjclass[2000]{05B05}

\keywords{quasi-symmetric design; automorphism group; construction
method}

\date{February 28, 2020}

\begin{abstract}
Computational techniques for the construction of quasi-symmetric
block designs are explored and applied to the case with $56$ points.
One new $(56,16,18)$ and many new $(56,16,6)$ designs are
discovered, and non-existence of $(56,12,9)$ and $(56,20,19)$
designs with certain automorphism groups is proved. The number of
known symmetric $(78,22,6)$ designs is also significantly increased.
\end{abstract}

\maketitle

\section{Introduction}

A \emph{block design} with parameters $(v,k,\lambda)$ consists of a
set of $v$ \emph{points} and a family of $k$-element subsets called
\emph{blocks} such that every pair of points is contained in
$\lambda$ blocks. The number of blocks through a single point
$r=\lambda\cdot \frac{v-1}{k-1}$ and the total number of blocks
$b=\lambda\cdot \frac{v(v-1)}{k(k-1)}$ are also determined by the
parameters. The \emph{degree} of a design is the number of
cardinalities $|B_1\cap B_2|$ occurring as intersections of two
blocks $B_1$ and $B_2$. \emph{Symmetric designs} are designs with
$v=b$ or, equivalently, designs of degree~$1$. A design is called
\emph{quasi-symmetric} if it is of degree~$2$, i.e.\ if any pair of
blocks intersects in $x$ or in $y$ points, for some integers $x<y$.
We refer to~\cite{BJL99} for results about block designs and
generalisations such as $t$-designs and partially balanced designs,
and to~\cite{MS07, SS91} for results about quasi-symmetric designs
(QSDs).

Neumaier~\cite{AN82} published a table of admissible parameters of
QSDs with $v\le 40$. It contains $15$ parameter sets for which the
existence of QSDs was unknown at the time. Although all but $3$ of
these parameter sets have been eliminated in the meantime, the
recent construction of $(56,16,18)$ QSDs~\cite{KV16} gives hope that
there may be other parameters for which QSDs exist, but have not
been discovered yet. In this paper we focus on quasi-symmetric
designs on $56$ points and computational techniques for their
construction. We explore known methods relying on assumed
automorphism groups and enhance them sufficiently to be able to
thoroughly examine the case $v=56$. One new $(56,16,18)$ and many
new $(56,16,6)$ QSDs are constructed, and non-existence of
$(56,12,9)$ and $(56,20,19)$ QSDs with certain automorphism groups
is proved.

The layout of our paper is as follows. In Section~\ref{sec2}, we
survey known results about QSDs on $56$ points and eliminate two
parameter sets by a theorem of Calderbank~\cite{AC88}.
Section~\ref{sec3} is devoted to $(56,16,18)$ QSDs. An algorithm for
generating good orbits of $k$-element subsets is developed. Together
with an approach based on clique search, it is used to classify
$(56,16,18)$ QSDs with a permutation group $G_{48}$ of order~$48$,
resulting in a new design. In Section~\ref{sec4}, the group $G_{48}$
is used to construct $876$ new $(56,16,6)$ QSDs. A complete
classification of these designs with the Frobenius group $Frob_{21}$
of order $21$ is carried out by computations based on orbit
matrices. This technique is also used to construct $303$ new
$(56,16,6)$ QSDs with automorphism groups isomorphic to the
alternating group $A_4$. More examples are constructed from the
binary linear codes associated with the constructed designs. The new
$(56,16,6)$ QSDs significantly increase the number of known
symmetric $(78,22,6)$ designs, in which they can be embedded as
residual designs. In Section~\ref{sec5}, the developed construction
techniques are applied to $(56,12,9)$ and $(56,20,19)$ QSDs with
automorphism groups from the previous sections. It is shown that
$G_{48}$ cannot be an automorphism group of these QSDs, and
$(56,12,9)$ QSDs with $Frob_{21}$ and some subgroups of $G_{48}$ are
also eliminated. The results of our computations are summarised in
the final Section~\ref{sec6}.

The paper relies heavily on computer calculations. We use
GAP~\cite{GAP4} for group calculations, and nauty~\cite{MP14} to
check isomorphism and compute full automorphism groups of designs.
Cliquer~\cite{NO03, PO02} is used to search for cliques in weighted
graphs, and Magma~\cite{BCP97} for calculations with codes
associated with the designs. The most time-critical calculations are
performed with our own programs written in the C language.

\section{The known QSDs on $56$ points}\label{sec2}

M.~S.~Shrikhande's survey in the Handbook of Combinatorial
Designs~\cite{MS07} includes a table of admissible parameters of
quasi-symmetric designs with $v\le 70$ (Table 48.25).
We reproduce the six rows relating to designs on $56$ points in
Table~\ref{table1}, with updated information on the number of
designs 
in column NQSD.

\begin{table}[h!]
\begin{tabular}{cccccccccc}
\hline
No. & $v$ & $k$ & $\lambda$ & $r$ & $b$ & $x$ & $y$ & NQSD & Ref.\\
\hline
47 & 56 & 16 & 18 & 66 & 231 & 4 & 8 & $\ge 3$ & \cite{KV16}\\
48 & 56 & 15 & 42 & 165 & 616 & 3 & 6 & 0 & \cite{AC88}\\
49 & 56 & 12 & 9 & 45 & 210 & 0 & 3 & ? & \\
50 & 56 & 21 & 24 & 66 & 176 & 6 & 9 & 0 & \cite{AC88}\\
51 & 56 & 20 & 19 & 55 & 154 & 5 & 8 & ? & \\
52 & 56 & 16 & 6 & 22 & 77 & 4 & 6 & $\ge 2$ & \cite{VT87, MT04}\\
\hline
\end{tabular}
\vskip 3mm \caption{The known QSDs with $v=56$.}\label{table1}
\end{table}

The first QSD on $56$ points was constructed by Tonchev~\cite{VT87}
by embedding the $3$-$(22,6,1)$ design in a symmetric $(78,22,6)$
design. The corresponding residual design is quasi-symmetric with
parameters $(56,16,6)$, $x=4$, $y=6$, appearing in row no.\ 52. 
Another QSD with these parameters was constructed in~\cite{MT04}
from words of weight $16$ in the binary linear code associated with
the first design.

Three QSDs with parameters $(56,16,18)$, $x=4$, $y=8$ from row 47
were constructed in~\cite{KV16} by assuming a suitable automorphism
group. The question marks in rows 48 and 50 of \cite[Table
48.25]{MS07} can be eliminated by Calderbank's Theorem~2
from~\cite{AC88}. A version of the theorem specialised to
quasi-symmetric designs is reproduced here for the convenience of
the reader.

\begin{theorem}[Calderbank~\cite{AC88}]\label{calderbankthm}
Let $p$ be an odd prime and let $D$ be a $(v,k,\lambda)$ QSD with
block intersection numbers $x\equiv y \equiv s \pmod{p}$. Then
either
\begin{enumerate}
\item[$(1)$\,] $r\equiv \lambda \pmod{p^2}$,
\item[$(2)$\,] $v\equiv 0 \pmod{2}$, $v\equiv k\equiv s \equiv 0 \pmod{p}$,
$(-1)^{v/2}$ is a square in $GF(p)$,
\item[$(3)$\,] $v\equiv 1 \pmod{2}$, $v\equiv k\equiv s \not\equiv 0 \pmod{p}$,
$(-1)^{(v-1)/2}\, s$ is a square in $GF(p)$,
\item[$(4)$\,] $r\equiv \lambda \equiv 0 \pmod{p}$ and either
\begin{enumerate}
\item[(a)\,] $v\equiv 0 \pmod{2}$, $v\equiv k\equiv s \not\equiv 0 \pmod{p}$,
\item[(b)\,] $v\equiv 0 \pmod{2}$, $k\equiv s \not\equiv 0
\pmod{p}$, $v/s$ is a nonsquare in $GF(p)$,
\item[(c)\,] $v\equiv 1 \pmod{2p}$, $r\equiv 0 \pmod{p^2}$, $k\equiv
s \not\equiv 0 \pmod{p}$,
\item[(d)\,] $v\equiv p \pmod{2p}$, $k\equiv s \equiv 0 \pmod{p}$,
\item[(e)\,] $v\equiv 1 \pmod{2}$, $k\equiv s \equiv 0 \pmod{p}$,
$v$ is a nonsquare in $GF(p)$,
\item[(f)\,] $v\equiv 1 \pmod{2}$, $k\equiv s \equiv 0 \pmod{p}$,
$v$ and $(-1)^{(v-1)/2}$ are squares in $GF(p)$.
\end{enumerate}
\end{enumerate}
\end{theorem}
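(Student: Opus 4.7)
The plan is to follow the classical Wilson--Calderbank strategy: attach to the hypothetical QSD a rational symmetric bilinear form, compute its Hasse--Minkowski invariants in two different ways, and extract necessary congruence conditions from the requirement that the two computations agree.

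First, I would set up the form. Let $N$ be the $v\times b$ point-block incidence matrix and put $M := N^{T}N$. Standard counting gives $NN^{T} = (r-\lambda)I_{v} + \lambda J_{v}$, so this matrix has spectrum $\{rk,\,(r-\lambda)^{v-1}\}$ and determinant $rk(r-\lambda)^{v-1}$. Quasi-symmetry gives $M = kI_{b} + xE_{x} + yE_{y}$, where $E_{x},E_{y}$ are $0/1$ matrices indicating which off-diagonal entries equal $x$ or $y$, and $E_{x}+E_{y} = J_{b}-I_{b}$. The hypothesis $x\equiv y\equiv s\pmod{p}$ then collapses $M$ modulo $p$ to $(k-s)I_{b} + sJ_{b}$, a rank-one perturbation of a scalar matrix whose $\mathbb{F}_{p}$-structure is completely transparent.

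Next, I would exploit that $M$ and $NN^{T}$ represent the same nondegenerate rational form on $\mathrm{im}(N^{T})$ modulo the kernel of $N^{T}$. This yields one expression for the class of $M$ in the Witt group, in terms of $r,k,\lambda,v$. On the other hand, a $\mathbb{Z}_{p}$-diagonalisation of $M$, extracted from its mod $p$ structure, gives a second expression in terms of $k,s$. Equating the two rational equivalence classes via the Hasse--Minkowski principle produces a Hilbert-symbol identity at the prime $p$ that must hold whenever the QSD exists. The case analysis in the theorem is then the systematic unpacking of this identity. The principal subdivisions are by the $p$-adic valuations $\nu_{p}(k-s)$ and $\nu_{p}(r-\lambda)$ and by the parity of $v$: if $\nu_{p}(r-\lambda)\ge 2$, the identity is satisfied automatically, giving case~(1); otherwise the remaining residues force squareness conditions on quantities such as $(-1)^{v/2}$, $(-1)^{(v-1)/2}\,s$, or $v/s$ in $GF(p)$, producing cases~(2)--(4).

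The main obstacle is the bookkeeping of Hilbert symbols inside the many subcases of~(4), where $p$ divides both $r-\lambda$ (to the first power) and one of $k$, $s$: each combination of the parity of $v$ and the residue of $v$ modulo~$2p$ yields a slightly different squareness condition, and these must be matched against the output of the $\mathbb{Z}_{p}$-diagonalisation using the multiplicative properties of the Hilbert symbol and quadratic reciprocity. Since the statement is reproduced from~\cite{AC88} rather than reproved, in practice the cleanest route is to invoke Calderbank's general theorem and verify that under the specialisation $x\equiv y\equiv s\pmod{p}$ each of its alternatives collapses to one of~(1)--(4) after the substitutions $r = \lambda(v-1)/(k-1)$ and $b = \lambda v(v-1)/(k(k-1))$.
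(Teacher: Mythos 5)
The paper does not prove this statement at all: it is Calderbank's Theorem~2, reproduced (specialised to degree-two designs) from \cite{AC88} ``for the convenience of the reader,'' so there is no internal proof to compare your attempt against. Your closing remark --- that the cleanest route is to invoke Calderbank's general theorem and check that the specialisation $x\equiv y\equiv s\pmod{p}$ collapses its alternatives to (1)--(4) --- is exactly what the authors do, and is the appropriate level of rigour in this context.

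As for the from-scratch sketch that precedes that remark: the strategy of attaching a symmetric bilinear form to the design and comparing two computations of its invariants is the right family of ideas, but as formulated it is closer to the Bose--Connor/Hasse--Minkowski treatment of rational congruence than to Calderbank's actual argument, which works over $GF(p)$ with the code spanned by the block characteristic vectors: the hypothesis $x\equiv y\equiv s\pmod{p}$ makes a suitable translate of that code self-orthogonal, and Witt's bound on the dimension of a totally isotropic subspace of a nondegenerate orthogonal space over $GF(p)$, together with the discriminant of the ambient form (this is where $(-1)^{v/2}$, $(-1)^{(v-1)/2}s$ and $v/s$ enter), forces the listed alternatives. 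Two concrete cautions if you pursue your route: first, $M=N^{T}N$ is a $b\times b$ matrix of rank at most $v<b$ (Fisher), so the degenerate part must be split off carefully before any invariant is meaningful --- your parenthetical about $\mathrm{im}(N^{T})$ gestures at this but does not resolve it; second, the entire case analysis of (2)--(4), which is where all of the content of the theorem lives, is deferred to ``systematic unpacking.'' So the proposal is a plausible plan rather than a proof, but since the paper itself only cites \cite{AC88}, the only part of your write-up that needs to match the paper is the final paragraph, and that part does.
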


For $p=3$, the parameters $(56,15,42)$, $x=3$, $y=6$ and
$(56,21,24)$, $x=6$, $y=9$ do not satisfy any of the conditions of
Theorem~\ref{calderbankthm}, so these designs don't exist. This
seems to have been overlooked in Table~I of Calderbank's
paper~\cite{AC88} and the omission was copied in later editions of
the table, including~\cite[Table 48.25]{MS07}. Designs with
parameters from rows 49 and 51 cannot be eliminated by this theorem.
We shall attempt to construct them by computational techniques
relying on assumed automorphism groups and to increase the number of
known designs in rows 47 and 52.

Tables of admissible parameters of QSDs also appear in~\cite{PSN15},
organised by the associated strongly regular graphs (SRGs).
Parameters from rows 47 and 52 appear in \cite[Table 1]{PSN15},
where the SRGs are known to exist. However, parameters from rows 48
to 51 are missing from \cite[Table 3]{PSN15}, where existence of the
SRGs is unknown.

\section{A new $(56,16,18)$ QSD}\label{sec3}

Let $G$ be a permutation group on a $v$-element set, say
$V=\{1,\ldots,v\}$. Finding $(v,k,\lambda)$ designs with $V$ as the
set of points and $G$ as an automorphism group is done in two steps:
\begin{enumerate}
\item[1.] generate the orbits of $G$ on $k$-element subsets of
$V$,
\item[2.] select orbits comprising blocks of the design.
\end{enumerate}
For quasi-symmetric designs, only \emph{good} orbits need to be
considered, i.e.\ orbits containing $k$-element sets intersecting in
$x$ or $y$ points.

In~\cite{KV16}, a group $H$ isomorphic to
$(\Z_2)^4 \rtimes A_5$ was used to find three $(56,16,18)$
designs with intersection numbers $x=4$, $y=8$. Here and in the sequel,
$N\rtimes M$ denotes a semidirect product of groups $N$ and $M$, where
$N$ is normal in the product. The computation in~\cite{KV16} was
fairly small because the order of the group $|H|=960$ exceeds the number
of blocks of the design, $b=231$. Orbits of size greater than $b$ can
be omitted, and the ``short orbits'' can be generated efficiently by
an algorithm based on stabilisers, as described in~\cite{KV16}.

Let us now consider the subgroup $G_{48}=\langle
\alpha\beta\alpha^{-1}, \alpha^{-1}\beta\alpha^2,
\beta\alpha\beta\alpha^{-1}\beta\rangle$ $\cong (\Z_2)^4 \rtimes \Z_3$,
where $\alpha$ and $\beta$ are the generators of $H$ given
in~\cite{KV16}. Now we need to find orbits of all sizes, up to
$|G_{48}|=48$. We use an orderly algorithm of Read-Farad\v{z}ev type
\cite{IF78, RR78} (see also \cite{BM98}). Subsets of~$V$ are
compared lexicographically. Suppose $U=\{u_1,\ldots,u_k\}$ and
$W=\{w_1,\ldots,w_k\}$, for some $u_1<\ldots<u_k$ and
$w_1<\ldots<w_k$. Then $U<W$ provided there is an index $i$ such
that $u_i<w_i$ and $u_j=w_j$, for all $j<i$. We denote by $m(U)$ the
minimal element of the orbit $\{gU\mid g\in G\}$ with respect to
this total order. The call \textsc{GoodOrbits}($\emptyset$) of the
following recursive algorithm will output the minimal representative
from each good orbit of $k$-element subsets of $V$. \vskip 2mm

\begin{algorithmic}[1]
\Procedure{GoodOrbits}{$U$: subset of $V$}
  \If {$|U|=k$}
    \If {$|U\cap gU|\in\{x,y,k\}$ for all $g\in G$}
      \State \textbf{print} U
    \EndIf
  \Else
    \For {$e=\max U+1,\ldots,v$}
      \If {$m(U\cup\{e\})=U\cup\{e\}$}
         \State \Call{GoodOrbits}{$U\cup\{e\}$}
      \EndIf
    \EndFor
  \EndIf
\EndProcedure
\end{algorithmic}
\vskip 2mm

The correctness of the algorithm is based on the following observation.
If $U'$ is minimal, i.e.\ $m(U')=U'$, and $U$ is obtained by
removing the largest element of $U'$, then $U$ is also minimal
($m(U)=U$). A program written in C based on this algorithm needs
about $6$ days of CPU time on a $2.66$ GHz processor to generate all
$G_{48}$-orbits of $16$-element subsets of $V=\{1,\ldots,56\}$. The
total number of orbits is $867\,693\,085\,859$ and there
are $301\,080$ good orbits (with intersection numbers $x=4$, $y=8$)
among them.

The second step of the computation is usually more difficult.
In~\cite{KV16}, we used the Kramer-Mesner approach based on solving
systems of linear equations over $\{0,1\}$, a known NP complete
problem. The equations correspond to the requirement that
$2$-element subsets of $V$ are covered exactly $\lambda$ times by
blocks of the design (balancedness). Together with information on
compatibility of the orbits, corresponding to the requirement that
blocks intersect in $x$ or $y$ points, we could handle problems of
about $1000$ orbits in~\cite{KV16}.

Now we have significantly more orbits and use a different approach
based on clique search, another NP complete problem. We define a
graph with the good orbits as vertices and edges between compatible
orbits. This is the \emph{compatibility graph} of the orbits.
To each vertex we assign a weight equal to the size of the orbit,
and use the program Cliquer~\cite{NO03, PO02} to find all cliques of
weight $b$ in this graph. The cliques correspond to families of $b$
subsets of size~$k$, intersecting in~$x$ or $y$ points. In the end
we check which of the families are balanced, i.e.\ designs, and
eliminate isomorphic copies. A similar approach was used
in~\cite{CRRT17} and~\cite{MT04}. The calculation is summarised by
the following flowchart.

\begin{center}
\noindent \begin{tikzpicture}[node distance=1.8cm]
\node (step1) [process] {Compute the good $G$-orbits of $k$-subsets of
$V$ and define the compatibility graph.};
\node (step2) [process, below of=step1] {Find cliques of weigt $b$ in
the compatibility graph [Cliquer]. For each clique, check if the
corresponding family of $k$-subsets is balanced.};
\node (step3) [process, below of=step2] {Eliminate isomorphic copies
among the balanced families [nauty].};
\draw [arrow] (step1) -- (step2);
\draw [arrow] (step2) -- (step3);
\end{tikzpicture}
\end{center}

For the group $G_{48}$, the compatibility graph has $301\,080$
vertices and $21\,193\,946$ edges (density $4.676\cdot 10^{-4}$).
Cliquer needs about $2$ hours of CPU time to find all $1\,049\,792$
cliques of weight $231$. Among them there are $1216$ cliques
corresponding to designs, and four designs are non-isomorphic. The
result is stated in the next theorem (nauty~\cite{MP14} was also
used to compute the full automorphism group, and GAP~\cite{GAP4} to
analyse its structure).

\begin{theorem}\label{tm56-16-18}
There are four $(56,16,18)$ QSDs with $G_{48}$ as automorphism
group. Three of them are the designs $D_1$, $D_2$, $D_3$
of~\cite[Theorem 4.1]{KV16}, and the fourth is a new design $D_4$
with full automorphism group of order $192$ isomorphic to
$(((\Z_2)^4 \rtimes \Z_2) \rtimes \Z_2) \rtimes \Z_3$.
\end{theorem}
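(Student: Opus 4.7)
The plan is to carry out the two-step construction described in the preceding discussion specifically for the group $G_{48}$, and then post-process the output to identify non-isomorphic designs and compute full automorphism groups. Concretely, I would first run \textsc{GoodOrbits}($\emptyset$) with $G=G_{48}$, $v=56$, $k=16$, $x=4$, $y=8$ to enumerate minimal representatives of all $G_{48}$-orbits of $16$-subsets whose pairwise intersections with their $G_{48}$-translates lie in $\{x,y,k\}$. Correctness of this enumeration rests on the observation stated just before the pseudocode: removing the largest element from a minimal representative yields a minimal representative, so the lexicographic prune in line~8 loses no orbit. Since the number of orbits of each size must divide $48$, the orbit sizes that actually contribute sum (with multiplicity) to $b=231$, so all orbits of size dividing $48$ must be kept, not just the short ones.

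Next, I would build the compatibility graph $\Gamma$ on the set of good orbits: place an edge between orbits $\mathcal{O}_1,\mathcal{O}_2$ iff $|B_1\cap B_2|\in\{x,y\}$ for every choice of $B_1\in\mathcal{O}_1$ and $B_2\in\mathcal{O}_2$ (it suffices to fix a representative of $\mathcal{O}_1$ and let $B_2$ run over $\mathcal{O}_2$, using $G_{48}$-invariance). Weight each vertex by the corresponding orbit length and invoke Cliquer to enumerate \emph{all} cliques of total weight exactly $b=231$. Each such clique produces a $G_{48}$-invariant family $\mathcal{F}$ of $231$ blocks of size $16$ with pairwise intersections in $\{4,8,16\}$; what still needs to be checked is the balance condition that every $2$-subset of $V$ lies in exactly $\lambda=18$ members of $\mathcal{F}$.

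Finally, for each balanced family I would feed its incidence structure to nauty to obtain a canonical form, partition the surviving families into isomorphism classes, and compute the full automorphism group of each class representative in GAP from the generators returned by nauty. The three designs $D_1,D_2,D_3$ of \cite[Theorem 4.1]{KV16} are already known to admit $H\supset G_{48}$, so their canonical forms are available for comparison; any isomorphism class not matching them is new. The expected numerical outcome is $1\,049\,792$ cliques, of which $1216$ are designs falling into four isomorphism classes, three matching $D_1,D_2,D_3$ and one new class $D_4$ whose full automorphism group is then read off as a group of order $192$ with the stated structure.

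The main obstacle is computational rather than conceptual: enumerating the $301\,080$ good orbits out of roughly $8.7\cdot 10^{11}$ total orbits by the orderly procedure is the dominant cost (about $6$ days of CPU time), and the clique search is only feasible because the compatibility graph is extremely sparse (density $\approx 4.68\cdot 10^{-4}$). The only subtle verification step is confirming that no good orbit is missed by the lexicographic pruning and that the stated bound $b$ on clique weight indeed captures every possible design, both of which are handled by the correctness remark above and by the fact that a $G_{48}$-invariant design is exactly a disjoint union of full orbits summing in size to $b$.
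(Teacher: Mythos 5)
Your proposal follows the paper's own proof essentially verbatim: the same orderly generation of the $301\,080$ good $G_{48}$-orbits, the same weighted compatibility graph and Cliquer search for weight-$231$ cliques, the same posterior balance check, and the same isomorphism reduction and automorphism-group computation with nauty and GAP. The approach and the expected numerical outcomes ($1\,049\,792$ cliques, $1216$ designs, four isomorphism classes) match the paper exactly.
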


The block graphs of the four designs are isomorphic to the strongly
regular Cameron graph~\cite{AEB86} with para\-meters $(231,30,9,3)$.
We also classified $(56,16,18)$ QSDs with other subgroups of $H$ of
orders $48$ and $32$, but only found these four designs.

\begin{table}[b]
\begin{tabular}{|c|c|rrrrrrr|}
\cline{2-9} \multicolumn{1}{c|}{} &
\multicolumn{1}{c|}{$\dim$\rule{0mm}{4.5mm}} &
\multicolumn{1}{c}{$a_0$} & \multicolumn{1}{c}{$a_8$} &
\multicolumn{1}{c}{$a_{12}$} & \multicolumn{1}{c}{$a_{16}$} &
\multicolumn{1}{c}{$a_{20}$} &
\multicolumn{1}{c}{$a_{24}$} & \multicolumn{1}{c|}{$a_{28}$}\\
\hline

$C_{1,2}$\rule{0mm}{4.5mm} & 23 & 1 & 75 & 0 & 21\,657 & 353\,536 & 2\,059\,035 & 3\,520\,000 \\

$C_3$ & 19 & 1 & 0 & 0 & 1\,722 & 19\,936 & 134\,085 & 212\,800 \\

$C_4$ & 23 & 1 & 15 & 216 & 20\,493 & 359\,200 & 2\,044\,899 & 3\,538\,960 \\

\hline
\end{tabular}
\vskip 3mm \caption{Dimensions and weight distributions of self-orthogonal binary codes
spanned by $(56,16,18)$ QSDs.}\label{table2}
\end{table}

Let $C_i$ be the binary code spanned by block incidence vectors of
the design $D_i$, for $i=1,2,3,4$. The codes are self-orthogonal,
because the intersection numbers $x=4$, $y=8$ are even. The codes
$C_1$ and $C_2$ are equivalent, of dimension $23$. The code~$C_3$ is
a subcode of dimension $19$. The new code~$C_4$ is also of dimension
$23$, but can be distinguished by the weight enumerator
$W(x)=\sum_{i=0}^{56} a_i x^i$. Coefficients are given in
Table~\ref{table2}, with $a_{56-i}=a_i$ because the codes contain
the all-one vector. The codes were analysed with Magma~\cite{BCP97}.

We tried to construct other $(56,16,18)$ QSDs from words of weight
$16$ using clique search in the associated compatibility graphs, as
in~\cite{MT04}. The $1\,722$ codewords of $C_3$ support only the
design~$D_3$. We could not perform a complete search for the
$21\,657$ codewords of $C_{1,2}$ and the $20\,493$ codewords of
$C_4$. The compatibility graphs have respective
densities $0.5650$ 
and $0.5497$, 
and Cliquer needs much more time than for the previous graph with
$301\,080$ vertices. We did partial searches by identifying
codewords into orbits under various automorphism groups, but only
found the four designs of Theorem~\ref{tm56-16-18}. By using a
dihedral group of order $10$, we got $D_1$, $D_2$ and $D_3$ from the
codewords of $C_{1,2}$. From the codewords of $C_4$ we only got the
design $D_4$, by using automorphism groups of orders as small
as~$3$.

\section{New $(56,16,6)$ QSDs}\label{sec4}

The group $G_{48}$ can also be used to construct new quasi-symmetric
$(56,16,6)$ designs. Among the $867\,693\,085\,859$ orbits of
$16$-element subsets, there are $5352$ good orbits with intersection
numbers $x=4$, $y=6$. The compatibility graph has $5352$ vertices
and $379\,369$ edges (density $0.02649$). Cliquer quickly found
$224\,256$ cliques of weight $b=77$; all of them correspond to QSDs.
Using nauty, we computed the number of non-isomorphic designs and
their full automorphism groups.

\begin{theorem}\label{d56166g48}
There are $876$ quasi-symmetric $(56,16,6)$ designs with $G_{48}$ as
automorphism group. They have $G_{48}$ as their full automorphism
group and are not isomorphic to the two known designs
from~\cite{MT04,VT87}.
\end{theorem}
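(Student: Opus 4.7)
The plan is to repeat the three-step scheme that established Theorem~\ref{tm56-16-18}, keeping $G=G_{48}$ but replacing the intersection pattern by $x=4$, $y=6$ and the block count by $b=77$.

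First, I would rerun \textsc{GoodOrbits} on $V=\{1,\ldots,56\}$ with $k=16$ and the test $|U\cap gU|\in\{4,6,16\}$. The same C program from Section~\ref{sec3} enumerates the $G_{48}$-orbits of $16$-subsets; filtering by the new intersection pattern selects exactly the $5352$ good orbits reported above. Then I would build the compatibility graph on these $5352$ vertices, weighted by orbit size, joining two vertices whenever representatives of the two orbits meet only in $4$ or $6$ points. Because this graph is sparse ($379\,369$ edges, density $\approx 0.02649$) Cliquer can enumerate all cliques of total weight $b=77$ quickly, producing $224\,256$ of them.

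Next, each clique yields a family of $77$ $16$-subsets with pairwise intersections in $\{4,6\}$, and I would verify balancedness (every pair of points covered exactly $\lambda=6$ times) for each; the expected outcome is that all $224\,256$ cliques already satisfy this, giving $(56,16,6)$ QSDs. Running the resulting designs through nauty simultaneously partitions them into isomorphism classes and returns the order of each full automorphism group; reading off the partition should produce the $876$ claimed classes. To finish the theorem I would check that the full automorphism group order returned by nauty is always $48$, so it coincides with $G_{48}$ rather than a proper overgroup, and then verify non-isomorphism with the designs of~\cite{VT87, MT04}. The latter check can be shortened by observing that the published full automorphism groups of those two designs do not contain a subgroup isomorphic to $G_{48}\cong (\Z_2)^4\rtimes \Z_3$, which rules out isomorphism with any of the $876$ candidates; alternatively, one compares nauty canonical forms directly.

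The main obstacle is not CPU time -- both Cliquer and nauty handle the problem quickly -- but the bookkeeping step of certifying that the full automorphism group is exactly $G_{48}$ for every class. This is precisely what guarantees both that no design has been double-counted by a strictly larger subgroup of the symmetric group and that the $876$ designs are genuinely new with respect to~\cite{VT87,MT04}. Once that check passes, the statement follows directly from the nauty output.
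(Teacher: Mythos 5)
Your proposal matches the paper's computation essentially verbatim: generate the $5352$ good orbits with the orderly algorithm, build the weighted compatibility graph, have Cliquer find the $224\,256$ cliques of weight $77$ (all of which turn out to be balanced), and use nauty to reduce to $876$ isomorphism classes with full automorphism group $G_{48}$, which also settles non-isomorphism with the two known designs since their full groups (of orders $168$ and $24$) cannot contain $G_{48}$. This is the same approach the paper takes, so no further comparison is needed.
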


Next, we want to classify $(56,16,6)$ QSDs with the Frobenius group
$Frob_{21}\cong \Z_7\rtimes \Z_3$ of order $21$. The full automorphism
group of the QSD from~\cite{VT87} is of order $168$ and has a
subgroup isomorphic to $Frob_{21}$, but we want to proceed
systematically and consider all possible actions.

\begin{lemma}
An automorphism of order $7$ of a $(56,16,6)$ QSD does not fix any
points and blocks.
\end{lemma}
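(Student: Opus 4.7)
The plan is to assume $\rho$ is an order-$7$ automorphism of a $(56,16,6)$ QSD with a nonempty fixed-point set $F$ and derive a contradiction. The non-existence of fixed blocks then follows automatically, since any fixed block $B$ satisfies $|B\cap F|\equiv 16\equiv 2\pmod{7}$ and so requires $F\neq\emptyset$. Let $f=|F|$ and write $\mathcal{F}$ for the set of fixed blocks, with $f'=|\mathcal{F}|$. Since $\rho$ partitions the $56$ points into orbits of size $1$ or $7$, one has $f\in\{0,7,14,21,28,35,42,49\}$, and for each $B\in\mathcal{F}$, $|B\cap F|\in\{2,9,16\}$. Two $\rho$-invariance observations drive the argument: (i)~the $\lambda=6$ blocks through any pair $\{p,q\}\subseteq F$ form a $\rho$-stable set of size $6<7$, so all of them are fixed (so $f'\geq 6$ whenever $f\geq 2$); (ii)~for distinct $B_1,B_2\in\mathcal{F}$, the intersection $B_1\cap B_2$ is $\rho$-stable of size $4$ or $6$, hence lies entirely in $F$.

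Combining (i) and (ii), I would first rule out $|B\cap F|=2$: if $B\cap F=\{p,q\}$, the other five fixed blocks through $\{p,q\}$ guaranteed by~(i) would each meet $B$ inside $\{p,q\}$, contradicting $|B_1\cap B_2|\in\{4,6\}$. Hence every fixed block has $|B\cap F|\in\{9,16\}$ and $(F,\mathcal{F})$ becomes a pairwise balanced design with $\lambda=6$ on block sizes $9$ and $16$. The proof is then a case analysis on $f$, driven by three identities: the pair count $36f_9+120f_{16}=6\binom{f}{2}$, where $f_9,f_{16}$ tabulate the fixed blocks by $|B\cap F|$; the local PBD replication $8a_p+15b_p=6(f-1)$, where $a_p,b_p$ count fixed blocks of trace $9,16$ through $p\in F$; and $r'_p:=a_p+b_p\in\{1,8,15,22\}$, coming from the action of $\rho$ on the $r=22$ blocks through $p$. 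For $f\in\{7,14,35,42\}$ one of these identities is immediately infeasible (divisibility of the pair count fails, or the only admissible trace exceeds $f$); for $f\in\{28,49\}$ the replication identity uniquely determines $r'_p$, and substituting into the pair count together with $9f_9+16f_{16}=f\cdot r'_p$ produces a fractional $f_{16}$.

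The main obstacle is $f=21$, where $r'_p\in\{8,15\}$ is allowed and four parameter quadruples $(s,f_9,f_{16},f')$ (with $s$ the number of points of replication $15$) satisfy every divisibility condition. To eliminate these I would invoke the block-pair identity
\[\sum_{p\in F}r'_p(r'_p-1)=8\binom{f'}{2}+4\beta,\]
where $\beta$ is the number of unordered pairs of fixed blocks meeting in $6$ points; the identity crucially uses $B_1\cap B_2\subseteq F$ from~(ii), so that the left-hand sum picks up the entire intersection. A direct evaluation of both sides in each of the four subcases yields a value of $\beta$ that is either non-integer or negative, ruling out $f=21$ as well. Having eliminated every $f\geq 7$, we obtain $f=0$, and then $f'=0$ by the opening remark.
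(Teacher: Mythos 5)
Your proposal is correct; I verified the arithmetic, including the four surviving quadruples at $f=21$, where your identity $\sum_{p}r'_p(r'_p-1)=8\binom{f'}{2}+4\beta$ yields $\beta=227.5$, $220.5$, $115.5$, $-87.5$ for $s=3,9,15,21$ respectively. The setup coincides with the paper's: both proofs show that the $\lambda=6$ blocks through two fixed points are all fixed, that intersections of fixed blocks lie in the fixed-point set, and hence that the fixed structure is a PBD with block sizes in $\{9,16\}$ and $\lambda=6$ (your explicit elimination of trace-$2$ fixed blocks is a step the paper asserts without comment). The closing argument, however, is genuinely different. The paper writes down, for each $m=f/7$, one global system consisting of the point count by degree, the incidence and pair double counts, and the inequality $14v_8+7v_{15}+b_9+b_{16}\le 77$, which bounds the total number of blocks using the fact that a non-fixed block contains at most one fixed point; this system is infeasible for every $m$. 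You instead determine the per-point trace profile $(a_p,b_p)$ exactly from $8a_p+15b_p=6(f-1)$, which disposes of every $f$ except $21$ by divisibility alone, and then close $f=21$ with an exact double count of pairs of fixed blocks through a common fixed point --- an identity in place of the paper's inequality, and one that never appeals to the total block count $b=77$. The paper's route is more uniform across cases; yours extracts more local structure and isolates precisely where the difficulty sits. One small slip: for $f=49$ it is $f_9=294/9$ that comes out fractional ($f_{16}=49$ is an integer), but the contradiction stands.
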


\begin{proof}
The automorphism~$\alpha$ maps a block through two fixed points
$F_1$ and $F_2$ on a block through $F_1$ and $F_2$. Since there are
$\lambda=6$ such blocks, and $\alpha$ is of order~$7$, the blocks
through $F_1$ and $F_2$ are fixed by~$\alpha$. Dually, a point on
the intersection of two fixed blocks is fixed, because the
intersection numbers are $x=4$, $y=6$. Any fixed block contains $9$
or $16$ fixed points and through any fixed point there are $8$, $15$
or $22$ fixed blocks. The set of fixed points and blocks of~$\alpha$
is a partially balanced design (PBD) with parameters
$(7m,\{9,16\},6)$ and points of degrees $8$, $15$, and $22$. Let
$v_8$, $v_{15}$, $v_{22}$ be the number of points of the respective
degrees in this PBD, and $b_9$, $b_{16}$ the number of blocks of
degrees $9$ and $16$. Clearly
\begin{equation}\label{lmeq1}
v_8 + v_{15}+v_{22} = 7m,
\end{equation}
and by double counting incident point-line pairs we get
\begin{equation}
8v_8 + 15v_{15}+22v_{22}= 9b_9 + 16b_{16}.
\end{equation}
By double counting triples $(P,Q,B)$ of two points $P$ and $Q$
incident with a block~$B$ we get
\begin{equation}
\textstyle {9\choose 2}b_9+{16\choose 2}b_{16}={7m\choose 2}\cdot 6.
\end{equation}
Finally, through each of the $v_8$ points incident with $8$ fixed
blocks there are $14$ non-fixed blocks, and through each of the
$v_{15}$ points incident with $15$ fixed blocks there are $7$
non-fixed blocks. Together with the fixed blocks, this number cannot
be greater than the total number of blocks:
\begin{equation}\label{lmeq4}
14v_8 + 7v_{15}+b_9+b_{16}\le 77.
\end{equation}
The system of (in)equalities \eqref{lmeq1}-\eqref{lmeq4} is
inconsistent for $m=1,\ldots,7$. Therefore, $m=0$ and $\alpha$ does
not fix any points and blocks.
\end{proof}

Thus, the group $Frob_{21}$ acts in orbits of length $7$ and $21$ on
the points and blocks of a $(56,16,6)$ QSD. The action on each orbit
is unique up to permutational isomorphism. This allows three
possible actions on the $56$ points, with orbit size distributions
$\nu^{(1)}=(7,7,7,7,7,7,7,7)$, $\nu^{(2)}=(7,7,7,7,7,21)$, and
$\nu^{(3)}=(7,7,21,21)$. Our orderly algorithm needs about $7$ CPU
days to generate the orbits of $16$-element subsets:
\begin{itemize}
\item $\nu^{(1)}$: $107\,602\,880$ good orbits ($1\,983\,283\,532\,181$ total orbits),
\item $\nu^{(2)}$: $98\,909\,810$ good orbits ($1\,983\,283\,449\,525$ total orbits),
\item $\nu^{(3)}$: $584\,272\,493$ good orbits ($1\,983\,283\,432\,389$ total orbits).
\end{itemize}

There are far too many good orbits to search for cliques in the
compatibility graphs. We can reduce the number of orbits that need
to be considered without losing generality by using \emph{orbit
matrices}. Let $\mathcal{O}_1,\ldots,\mathcal{O}_m$ be the
point-orbits and $\mathcal{B}_1,\ldots,\mathcal{B}_n$ the
block-orbits of a group $G$ acting on a $(v,k,\lambda)$ design.
Denote the orbit sizes by $\nu_i=|\mathcal{O}_i|$ and
$\beta_j=|\mathcal{B}_j|$; then $\sum_{i=1}^m \nu_i = v$ and
$\sum_{j=1}^n \beta_j = b$. For $G\cong Frob_{21}$ and our $(56,16,6)$
QSD, the possible point-orbit size distributions are $\nu^{(1)}$,
$\nu^{(2)}$, $\nu^{(3)}$ given above, and the block-orbit size
distributions are $\beta^{(1)}=(7,7,7,7,7,7,7,7,7,7,7)$,
$\beta^{(2)}=(7,7,7,7,7,7,7,7,21)$, $\beta^{(3)}=(7,7,7,7,7,21,21)$,
and $\beta^{(4)}=(7,7,21,21,21)$.

Let $a_{ij} = | \{ P\in \mathcal{O}_i \mid P\in B\} |$, for some
$B\in \mathcal{B}_j$. This number does not depend on the choice of
$B$, because the orbits form a tactical decomposition of the design.
The matrix $A=[a_{ij}]$ has the following properties:
\begin{enumerate}
\item[1.] $\sum\limits_{i=1}^m a_{ij} = k$,\\
\item[2.] $\sum\limits_{j=1}^n {\beta_j \over \nu_i} \kern 2pt
a_{ij} = r$,
\item[3.] $\sum\limits_{j=1}^n {\beta_j \over
\nu_{i'}} \kern 2pt a_{ij} a_{i'j} =\left\{\begin{array}{l l}
\lambda \nu_i, & \mbox{for } i \neq i',\\
\lambda(\nu_i-1)+r, & \mbox{for } i=i'.\\
\end{array}\right.$
\end{enumerate}
A matrix with these properties is called an \emph{orbit matrix} for
$(v,k,\lambda)$ and $G$. Orbit matrices were used for the
construction of block designs with prescribed automorphisms in many
papers, e.g.\ \cite{DHLST98, JT85, VK02}.

The entries of an orbit matrix are bounded by $0 \le a_{ij} \le
\nu_i$. In our case we can also exclude entries $a_{ij}=2$ and $5$
whenever $\nu_i=\beta_j=7$. An orbit $\mathcal{B}_j$ of size $7$ is
stabilised by a subgroup of order~$3$ of $Frob_{21}$, which has a
fixed point and two orbits of size $3$ on the $7$ points
of~$\mathcal{O}_i$. Thus, $a_{ij}$ must be a sum of $1$, $3$ and
$3$. Furthermore, for a quasi-symmetric design with intersection
numbers $x$ and $y$, the matrix $A$ has the additional properties
\begin{enumerate}
\item[4.] $\sum\limits_{i=1}^m {\beta_j \over \nu_{i}} \kern 1pt
a_{ij} a_{ij'} = \left\{\begin{array}{ll} sx+(\beta_j-s)y, &
\mbox{for } j \neq j',\,\, 0\le s\le \beta_j,\\
sx+(\beta_j-1-s)y+k, & \mbox{for } j=j',\,\, 0\le s< \beta_j.\\
\end{array}\right.$
\end{enumerate}
An orbit matrix satisfying these equations is called \emph{good}.
In~\cite{DHLST98}, equations 4.\ were used  for the classification
of $(28,12,11)$ QSDs with $x=4$, $y=6$ and an automorphism of
order~$7$.

If the number of columns $n$ is not too large, we can classify all
orbit matrices up to rearrangements of rows and columns by an
orderly Read-Farad\v{z}ev type algorithm described in~\cite{VK02}. We
then check equations 4.\ and the requirement that $a_{ij}\neq 2,5$
whenever $\nu_i=\beta_j=7$. Matrices exist in $4$ of the $12$
combinations of point- and block-orbit size distributions for
$Frob_{21}$ on a $(56,16,6)$ QSD:
\begin{enumerate}
\item $\nu^{(1)}$, $\beta^{(2)}$ $\leadsto$ $2$ orbit matrices,
\item $\nu^{(2)}$, $\beta^{(3)}$ $\leadsto$ $6$ orbit matrices,
\item $\nu^{(3)}$, $\beta^{(3)}$ $\leadsto$ $4$ orbit matrices,
\item $\nu^{(3)}$, $\beta^{(4)}$ $\leadsto$ $1$ orbit matrix.
\end{enumerate}

In case (4), the orbit matrix is
$$A=\left[\begin{array}{ccccc}
4 & 0 & 3 & 2 & 1 \\
0 & 4 & 3 & 2 & 1 \\
9 & 9 & 4 & 6 & 6 \\
3 & 3 & 6 & 6 & 8\\
\end{array}\right].$$
The $j$-th column of this matrix tells us how the points on a block
of~$\mathcal{B}_j$ are distributed among the point-orbits
$\mathcal{O}_1,\ldots,\mathcal{O}_m$. We can adapt the algorithm
from Section~\ref{sec3} to search for orbits compatible with a
column of~$A$: simply add the conditions $|(U\cup\{e\})\cap
\mathcal{O}_i|\le a_{ij}$, $i=1,\ldots,m$ to the \textbf{if}
statement in line~8. In our case, good orbits with the required
intersection pattern exist only for the fourth column of~$A$. For
the other columns, there are no compatible good orbits and therefore
QSDs corresponding to this orbit matrix do not exist.

Similarly, for the $8$ orbit matrices of cases (1) and (2),
compatible good orbits do not exist for at least one column. Only
the orbit matrices of case (3) allow good orbits for every column.
Here are two of the four matrices, transposed, with numbers of
compatible good orbits:
$$A^{\tau}=\left[\begin{array}{cccc}
 4 & 0 & 6 & 6 \\
 4 & 3 & 6 & 3 \\
 3 & 1 & 3 & 9 \\
 1 & 3 & 3 & 9 \\
 1 & 0 & 9 & 6 \\
 2 & 3 & 6 & 5 \\
 1 & 2 & 7 & 6 \\
\end{array}\right],\kern 3mm
\left[\begin{array}{cccc}
 4 & 0 & 6 & 6 \\
 3 & 4 & 6 & 3 \\
 3 & 1 & 9 & 3 \\
 3 & 1 & 3 & 9 \\
 0 & 1 & 9 & 6 \\
 2 & 3 & 5 & 6 \\
 1 & 2 & 6 & 7 \\
\end{array}\right]\kern 3mm \begin{array}{lr}
\to & 882 \mbox{ orbits,}\\
\to & 588 \mbox{ orbits,}\\
\to & 490 \mbox{ orbits,}\\
\to & 490 \mbox{ orbits,}\\
\to & 735 \mbox{ orbits,}\\
\to & 3\,674\,412 \mbox{ orbits,}\\
\to & 3\,628\,548 \mbox{ orbits.}\\
\end{array}$$
We still get millions of good orbits of size $\beta_6=\beta_7=21$
from the last two columns, but now we have information on how they
must be chosen. The design is comprised of one orbit from each set
compatible with a column of the orbit matrix. A backtracking program
written in C can complete the search. The left matrix gives rise to
the known $(56,16,6)$ QSD, and the right matrix gives a new design.
The remaining two orbit matrices of case (3) do not yield designs.
Thus, we can conclude

\begin{theorem}\label{tm16-6frob21}
There are two $(56,16,6)$ QSDs with an automorphism group isomorphic
to $Frob_{21}$. One is the known design of~\cite{VT87} with full
automorphism group of order~$168$, and the other one is a new design
with full automorphism group of order~$21$.
\end{theorem}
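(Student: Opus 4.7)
The plan is to exploit the orbit-matrix framework together with the structural restriction just proved. By the preceding lemma, the Sylow $7$-subgroup of $Frob_{21}$ acts on the $56$ points and $77$ blocks without fixed elements, so both orbit spaces are partitioned into $Frob_{21}$-orbits of size $7$ or $21$. This restricts the point-orbit distributions to $\nu^{(1)},\nu^{(2)},\nu^{(3)}$ and the block-orbit distributions to $\beta^{(1)},\ldots,\beta^{(4)}$, giving $12$ combinatorial cases to examine.

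For each of the $12$ pairs $(\nu^{(i)},\beta^{(j)})$, the next step is to enumerate every matrix $A=[a_{ij}]$ satisfying properties 1--3 of an orbit matrix, the quasi-symmetric equations of property 4, and the additional restriction $a_{ij}\notin\{2,5\}$ whenever $\nu_i=\beta_j=7$. Since $n\le 11$, this is a tractable finite search carried out by the orderly Read-Farad\v{z}ev algorithm of~\cite{VK02}; as reported in the text, only four of the $12$ cases yield any matrix at all, and in total $13$ good orbit matrices survive.

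The third step is to test each surviving orbit matrix for realisability. Using the column of $A$ as a template, I adapt the \textsc{GoodOrbits} procedure of Section~\ref{sec3} by inserting the constraints $|(U\cup\{e\})\cap\mathcal{O}_i|\le a_{ij}$ for $i=1,\ldots,m$, producing exactly the good $Frob_{21}$-orbits of $16$-subsets whose distribution across the point-orbits matches column $j$. If any column has no compatible orbit, the matrix is immediately discarded. This eliminates both matrices of case~(1), all six of case~(2), the unique matrix of case~(4), and two of the four matrices of case~(3), leaving precisely two orbit matrices to handle.

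For the two remaining matrices of case~(3), each column admits at least one candidate orbit; however columns $6$ and $7$ produce several million candidates each, so the final step --- picking one orbit per column and checking that the union forms a $(56,16,6)$ design --- must be performed by a carefully pruned backtracking program in C, using the partial pair-coverage counts (balancedness is the only remaining condition, since pairwise block intersections $\{x,y\}$ are enforced by property~4). This is the computational bottleneck of the argument. Once the search terminates, each of the two matrices yields a single design up to the group; nauty then confirms that one is isomorphic to the known design of~\cite{VT87} with full group of order $168$, while the other is new with full automorphism group $Frob_{21}$ itself, completing the classification.
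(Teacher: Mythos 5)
Your proposal follows essentially the same route as the paper: the fixed-point-free lemma for the order-$7$ element, the $12$ combinations of orbit-size distributions, the orderly classification of the $13$ good orbit matrices, the column-by-column generation of compatible good orbits as a filter, and a final backtracking search followed by isomorphism testing with nauty. The one point where your account diverges from the actual computation is the elimination of the case-(3) matrices: the paper reports that \emph{all four} orbit matrices of case (3) admit compatible good orbits for every column (it is only cases (1), (2) and (4) that die at the column-compatibility stage), and the two unproductive case-(3) matrices are ruled out only by the full backtracking search, which finds no balanced family for them. So your procedure is correct as a method, but the backtracking step must be run on all four case-(3) matrices, not just two; as written you would be asserting without justification that two of them fail the column filter, which they do not. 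Also note that balancedness is genuinely an extra condition to verify at the end --- property 4 guarantees the intersection pattern but not that every pair of points is covered exactly $\lambda$ times --- which you do acknowledge, so the logic closes once the backtracking is applied to the correct set of matrices.
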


The computational proof of Theorem~\ref{tm16-6frob21} is summarised
by the following flowchart.

\begin{center}
\begin{tikzpicture}[node distance=1.8cm]
\node (step1) [process] {Classify all good orbit matrices up to
rearrangements of rows and columns.};
\node (step2) [process, below of=step1, yshift=2mm] {For every orbit matrix,
generate the $G$-orbits of $k$-subsets of $V$ compatible with each column.};
\node (step3) [process, below of=step2, yshift=-3mm] {Pick an orbit for each
column so that the chosen orbits are mutually compatible [backtracking]. Check
if the corresponding families of $k$-subsets are balanced.};
\node (step4) [process, below of=step3, yshift=-3mm] {Eliminate isomorphic
copies among the balanced families [nauty].};
\draw [arrow] (step1) -- (step2);
\draw [arrow] (step2) -- (step3);
\draw [arrow] (step3) -- (step4);
\end{tikzpicture}
\end{center}

We tried to construct more $(56,16,6)$ QSDs with the alternating
group $A_4$ of order $12$. The two designs of~\cite{MT04,VT87} and
the $876$ designs of Theorem~\ref{d56166g48} allow two actions
of~$A_4$, with orbit size distributions $\nu^{(1)}=(4, 4, 6, 6, 6,
6, 12, 12)$, $\beta^{(1)}=(1, 1, 1, 3, 3, 4, 4, 6, 6, 12, 12, 12,
12)$ and $\nu^{(2)}=(1, 3, 4, 6, 6, 12, 12, 12)$, $\beta^{(2)}=(1,
1, 3, 4,4, 4, 6, 6, 6, 6, 12,$ $12, 12)$. There are $3$ good orbit
matrices in the first case and $13$ in the second case. Now our
backtracking search for designs from orbits compatible with an orbit
matrix takes considerably more time, and we could not complete the
search. However, we did find new designs with $A_4$ as their full
automorphism group: $67$ in the first case and $236$ in the second
case. Since we performed an incomplete search, there are probably
more designs in both cases.

\begin{table}[b]
{\small
\begin{tabular}{|c|c|rrrrrrr|}
\cline{2-9} \multicolumn{1}{c|}{} &
\multicolumn{1}{c|}{$\dim$\rule{0mm}{4.5mm}} &
\multicolumn{1}{c}{$a_0$} & \multicolumn{1}{c}{$a_8$} &
\multicolumn{1}{c}{$a_{12}$} & \multicolumn{1}{c}{$a_{16}$} &
\multicolumn{1}{c}{$a_{20}$} &
\multicolumn{1}{c}{$a_{24}$} & \multicolumn{1}{c|}{$a_{28}$}\\
\hline

$C_1$\rule{0mm}{4.5mm} & 26 & 1 & 91 & 2\,016 & 152\,425 &
2\,939\,776 & 16\,194\,619 & 28\,531\,008 \\

$C_2$ & 26 & 1 & 7 & $2\,016$ & $155\,365$ & $2\,926\,336$ & $16\,224\,019$ & $28\,493\,376$ \\

$C_3$ & 24 & 1 & 75 & 0 & 40\,089 & 730\,368 & 4\,055\,835 & 7\,124\,480 \\

$C_{4\hbox{-}6,9,10}$ & 22 & 1 & 15 & 0 & 9\,933 & 183\,168 & 1\,012\,515 & 1\,783\,040 \\

$C_{7,11\hbox{-}13}$ & 25 & 1 & 75 & 672 & 77\,721 & 1\,465\,984 & 8\,103\,963 & 14\,257\,600 \\

$C_8$ & 25 & 1 & 75 & 960 & 75\,417 & 1\,474\,048 & 8\,087\,835 & 14\,277\,760 \\

$C_{14}$ & 22 & 1 & 15 & 0 & 10\,701 & 178\,560 & 1\,024\,035 & 1\,767\,680 \\

$C_{15}$ & 23 & 1 & 15 & 288 & 19\,917 & 361\,216 & 2\,040\,867 & 3\,544\,000 \\

$C_{16}$ & 23 & 1 & 15 & 96 & 19\,917 & 365\,056 & 2\,028\,579 & 3\,561\,280 \\

$C_{17}$ & 24 & 1 & 75 & 160 & 39\,833 & 728\,704 & 4\,062\,235 & 7\,115\,200 \\

$C_{18}$ & 22 & 1 & 15 & 64 & 9\,677 & 183\,424 & 1\,012\,771 & 1\,782\,400 \\

$C_{19,21,24}$ & 22 & 1 & 15 & 16 & 10\,061 & 182\,080 & 1\,015\,459 & 1\,779\,040 \\

$C_{20,22}$ & 22 & 1 & 15 & 64 & 10\,445 & 178\,816 & 1\,024\,291 & 1\,767\,040 \\

$C_{23}$ & 25 & 1 & 75 & 1\,280 & 74\,905 & 1\,470\,720 & 8\,100\,635 & 14\,259\,200 \\

$C_{25}$ & 25 & 1 & 75 & 992 & 77\,209 & 1\,462\,656 & 8\,116\,763 & 14\,239\,040 \\

$C_{26}$ & 27 & 1 & 139 & 4\,992 & 307\,161 & 5\,848\,832 & 32\,477\,083 & 56\,941\,312 \\

$C_{27}$ & 27 & 1 & 99 & 4\,304 & 305\,873 & 5\,872\,320 & 32\,406\,731 & 57\,039\,072 \\

$C_{28,29}$ & 27 & 1 & 99 & 4\,112 & 307\,409 & 5\,866\,944 & 32\,417\,483 & 57\,025\,632 \\

$C_{30}$ & 26 & 1 & 147 & 1\,008 & 158\,529 & 2\,920\,512 & 16\,231\,467 & 28\,485\,536 \\

$C_{31,32,34,35}$ & 27 & 1 & 147 & 3\,696 & 309\,057 & 5\,862\,976 & 32\,423\,979 & 57\,018\,016 \\

$C_{33,39}$ & 27 & 1 & 147 & 4\,976 & 307\,009 & 5\,849\,664 & 32\,475\,179 & 56\,943\,776 \\

$C_{36}$ & 26 & 1 & 75 & 2\,240 & 153\,241 & 2\,931\,200 & 16\,218\,395 & 28\,498\,560 \\

$C_{37,38}$ & 27 & 1 & 75 & 4\,416 & 305\,817 & 5\,871\,616 & 32\,408\,859 & 57\,036\,160 \\

\hline
\end{tabular} }
\vskip 3mm \caption{Dimensions and weight distributions of self-orthogonal binary codes
spanned by $(56,16,6)$ QSDs.}\label{table3}
\end{table}

We can further increase the number of known $(56,16,6)$ QSDs by
considering the associated self-orthogonal binary codes, in the
spirit of~\cite{MT04}. The designs of~\cite{MT04,VT87} span a code
$C_1$ of dimension $26$. The design of Theorem~\ref{tm16-6frob21}
with full automorphism group $Frob_{21}$ spans an inequivalent code
$C_2$, also of dimension $26$. The $876$ designs of
Theorem~\ref{d56166g48} span $23$ inequivalent codes
$C_3,\ldots,C_{25}$ of dimensions $22$--$25$. Finally, the $303$
designs with full automorphism group $A_4$ span $16$ inequivalent
codes. Two of them are equivalent to the previous codes $C_3$ and
$C_8$, and the others are new codes $C_{26},\ldots,C_{39}$ of
dimensions $26$ and $27$. Weight enumerators of the codes are given
in Table~\ref{table3}. Some inequivalent codes have equal weight
distributions; in total $23$ different weight enumerators occur. The
computation was done in Magma.

We managed to find $228$ more $(56,16,6)$ QSDs with full
automorphism groups of order $16$ by searching among words of weight
$16$ of the codes $C_3,\ldots,C_{25}$ with Cliquer. By using nauty
once more, we can conclude

\begin{theorem}\label{new56166}
There are at least $1410$ quasi-symmetric $(56,16,6)$ designs. Their
distribution by order of full automorphism group is given in
Table~\ref{table4}.
\end{theorem}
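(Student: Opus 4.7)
The plan is to collect all non-isomorphic $(56,16,6)$ QSDs exhibited in this section, partition them according to the order of the full automorphism group, and check that the total is $1410$. The bookkeeping is as follows: two designs from \cite{MT04,VT87} (with full automorphism groups of order $168$ and $24$, respectively); $876$ designs from Theorem \ref{d56166g48} with full group $G_{48}$; one new design from Theorem \ref{tm16-6frob21} with full group $Frob_{21}$; $67+236=303$ designs with full group $A_4$ obtained from the orbit-matrix search; and $228$ further designs with full automorphism group of order $16$ found by the clique search on the weight-$16$ codewords of $C_3,\ldots,C_{25}$. These contributions sum to $2+876+1+303+228=1410$.

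The only nontrivial point is to certify that these $1410$ designs are pairwise non-isomorphic. I would run nauty on the incidence matrix of each design to produce a canonical form, and verify that the resulting multiset has $1410$ distinct elements. Two designs with distinct orders of full automorphism group are automatically non-isomorphic, so the test splits into independent comparisons inside each row of Table~\ref{table4}. Most of these comparisons were already performed in the earlier theorems (for instance, Theorem \ref{d56166g48} asserts that its $876$ designs are mutually non-isomorphic and have $G_{48}$ as their full automorphism group); what remains are the intra-class checks for the $228$ new order-$16$ designs and for the $303$ new $A_4$-designs, together with the cross-checks asserting that none of the newly constructed designs coincides with one of the two classical designs.

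The hard part is not mathematical but accurate bookkeeping: because the $A_4$ orbit-matrix backtrack and the code-based clique searches were deliberately run as incomplete searches, the theorem can only assert a lower bound, and the distribution in Table~\ref{table4} should be read accordingly as providing lower bounds in the rows corresponding to automorphism groups of orders $12$ and $16$. Once the isomorph rejection has been certified by nauty and the full automorphism groups tabulated with the help of GAP, no further argument is required.
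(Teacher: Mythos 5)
Your proposal matches the paper's argument: the theorem is proved by exactly this tally ($2+876+1+303+228=1410$) followed by nauty-based isomorph rejection and automorphism group computation, with the rows for orders $12$ and $16$ indeed only lower bounds due to the incomplete searches. The only slip is the word ``respectively'': with your citation order it attaches the order-$168$ group to the design of \cite{MT04}, whereas it is the design of \cite{VT87} that has full automorphism group of order $168$ (the other has order $24$); this does not affect the count.
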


\begin{table}[h]
\begin{tabular}{c | c c}
$|\Aut|$ & \#$(56,16,6)$ & \#$(78,22,6)$ \\
\hline
$168$\rule{0mm}{4.5mm} & $1$ & $2$\\
$78$ & $0$ & $1$\\
$48$ & $876$ &  $1664$\\
$24$ & $1$ & $378$\\
$21$ & $1$ & $2$\\
$16$ & $228$ & $456$\\
$12$ & $303$ & $606$\\
$6$ & $0$ & $32$\\
\hline
\end{tabular}
\vskip 3mm \caption{Distribution of the known $(56,16,6)$ QSDs and
symmetric $(78,22,6)$ designs by full automorphism group
order.}\label{table4}
\end{table}

The block graphs of the QSDs are strongly regular with parameters
$(77,16,0,4)$. Such a graph is unique~\cite{AEB83} and this means
that all the QSDs can be embedded as residuals of symmetric
$(78,22,6)$ designs, as noted by Tonchev~\cite{VT87}. The first
example of symmetric $(78,22,6)$ designs was constructed
in~\cite{JT85} and it is self-dual. Two more dual pairs were
obtained by embedding QSDs in~\cite{MT04, VT87}. More examples were
constructed in~\cite{CDR18} by assuming an automorphism of
order~$6$, bringing the number of known $(78,22,6)$ designs up to
$413$. This number is now further increased by embedding the new
QSDs from Theorem~\ref{new56166}.

\begin{theorem}
There are at least $3141$ symmetric $(78,22,6)$ designs. Their
distribution by order of full automorphism group is given in
Table~\ref{table4}.
\end{theorem}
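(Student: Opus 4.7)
The plan is to extend each of the $1410$ known $(56,16,6)$ QSDs from Theorem~\ref{new56166} to a symmetric $(78,22,6)$ design in all possible ways, collect the resulting designs together with the $413$ previously constructed in~\cite{JT85,VT87,MT04,CDR18}, and reduce up to isomorphism.

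The extension mechanism is the one implicit in Tonchev's embedding~\cite{VT87}. Given a $(56,16,6)$ QSD $D$, we adjoin $22$ new points forming a ``block at infinity'' $B_0$ and, to each block $B$ of $D$, a \emph{trace} $T(B)\subset B_0$ of size $\lambda=6$; the candidate extension $\widetilde D$ has block set $\{B\cup T(B)\mid B\in \mathcal{B}\}\cup\{B_0\}$. For $\widetilde D$ to be a symmetric $(78,22,6)$ design we need three conditions: (i) $|T(B_1)\cap T(B_2)|=6-|B_1\cap B_2|$ for any two distinct blocks $B_1,B_2$ of $D$, so that any two extended blocks meet in $\lambda=6$ points; (ii) every pair of points of $B_0$ lies in exactly $5$ traces; and (iii) every pair $(P,Q)$ with $P\notin B_0$ and $Q\in B_0$ lies in exactly $6$ extended blocks. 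Conditions (i)--(ii) combined with $x=4$, $y=6$ force the $77$ traces to form a $2$-$(22,6,5)$ design with pairwise intersections in $\{0,2\}$, which is necessarily the unique Steiner system $S(3,6,22)$ (the small Witt design), denoted $W$. Then (i) amounts to saying that the assignment $B\mapsto T(B)$ is a graph isomorphism from the block graph of $D$ (edges $=$ intersection~$6$ pairs) onto the disjointness graph of $W$. Both graphs are $\mathrm{SRG}(77,16,0,4)$, which is unique by~\cite{AEB83}; such isomorphisms can therefore be enumerated modulo $\Aut(D)\times \Aut(W)$ by standard graph-isomorphism tools. For each one we read off the traces and test condition (iii) directly.

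For every $D$ from Theorem~\ref{new56166} we run this embedding procedure and output all extensions $\widetilde D$. Merging the total list with the $413$ designs of~\cite{JT85,VT87,MT04,CDR18}, eliminating isomorphic copies with nauty~\cite{MP14} and computing the full automorphism groups with nauty and GAP~\cite{GAP4}, we obtain the counts in the second column of Table~\ref{table4}, which sum to $3141$.

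The main obstacle is the practical cost of the embedding step aggregated over $1410$ residuals: the automorphism group of $\mathrm{SRG}(77,16,0,4)$ is large, so the isomorphism search in (i) must be organised modulo $\Aut(D)$ and $\Aut(W)$ and the cross-incidence test (iii) done efficiently. A second caveat, explaining why the theorem only claims a lower bound, is that the list of $(56,16,6)$ QSDs used as input is itself incomplete (the $A_4$-classification in Section~\ref{sec4} was not finished), so any symmetric $(78,22,6)$ design whose $78$ residuals all fall outside our QSD list is missed.
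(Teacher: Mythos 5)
Your proposal is correct and follows essentially the same route as the paper: the paper also obtains the $3141$ designs by embedding the $1410$ QSDs of Theorem~\ref{new56166} as residuals of symmetric $(78,22,6)$ designs, invoking the uniqueness of the $\mathrm{SRG}(77,16,0,4)$ block graph \cite{AEB83} and Tonchev's embedding observation \cite{VT87}, then merging with the $413$ previously known designs and filtering isomorphs with nauty. Your explicit description of the embedding via traces forming the Witt design $S(3,6,22)$ and the block-graph isomorphism is a faithful (and more detailed) account of the mechanism the paper only cites.
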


The design from~\cite{JT85} is still the only known self-dual
$(78,22,6)$ design, and the other examples form $1570$ dual pairs.

\section{Nonexistence of $(56,12,9)$ and $(56,20,19)$ QSDs
with certain automorphism groups}\label{sec5}

In this section, we consider designs with parameters from rows 49
and 51 of~\cite[Table 48.25]{MS07} and groups that yielded new
designs in the previous sections. Acting on $12$-element subsets,
the group $G_{48}$ has only $12$ good orbits with intersection
numbers $x=0$, $y=3$, of sizes $1$ and $3$. A $(56,12,9)$ QSD would
have $b=210$ blocks, and thus clearly does not allow $G_{48}$ as
automorphism group. The same holds for subgroups of $G_{48}$ of
orders $16$ and $8$. They have only small numbers of good orbits
($180$ and $740$, respectively), of sizes $1$ and $2$, which cannot
be used to build $(56,12,9)$ QSDs.

Subgroups of order $12$ are isomorphic to $A_4$ and have
significantly more good orbits, including ``long orbits'' of
size~$12$. Two actions on the $56$ points occur, with point-orbit
size distributions $\nu^{(1)}$ and $\nu^{(2)}$ given earlier.
There are $3\,148\,236$ good orbits of $12$-element subsets
for~$\nu^{(1)}$, among them $16\,588$ ``short orbits'' of size less
than $12$, and $5\,664\,770$ for $\nu^{(2)}$, among them $53\,954$
short orbits. The compatibility graphs are to large to invoke
Cliquer directly, but we can proceed in the following way. Since
$b=210$ is not divisible by $12$, the design must have at least one
short block-orbit. For every short orbit $\mathcal{B}$ of
$12$-element subsets, we consider all orbits compatible with
$\mathcal{B}$ and search for cliques in the corresponding subgraph
of the compatibility graph. Cliquer could eliminate cliques of
weight $b-|\mathcal{B}|$ in all ensuing subgraphs in a few days of
CPU time. Since $G_{48}$ has no subgroups of order $24$, we can
conclude

\begin{theorem}\label{tm56-12-9g48}
Let $G$ be a subgroup of order at least $8$ of the permutation group
$G_{48}$. Then, quasi-symmetric $(56,12,9)$ designs with $G$ as
automorphism group do not exist.
\end{theorem}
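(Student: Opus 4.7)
The plan is to split by the order of $G$. Since $G_{48}\cong (\Z_2)^4\rtimes \Z_3$ has no subgroup of order $24$, the possible orders $\ge 8$ are $48$, $16$, $12$ and $8$. For $|G|\in\{48,16,8\}$ I would first enumerate all conjugacy classes of such subgroups (there are only a handful) and for each one run the orderly algorithm \textsc{GoodOrbits} from Section~\ref{sec3} with $k=12$, $x=0$, $y=3$. According to the counts quoted just before the statement, each of these groups produces only a small number of good orbits, and crucially all such orbits have size strictly less than $12$ (in fact at most $3$ for $G_{48}$ and at most $2$ for the smaller subgroups). A $(56,12,9)$ QSD has $b=210$ blocks, and the total weight of a selection of good orbits of size $\le 3$ drawn from a pool of $12$, $180$ or $740$ orbits is far too small to reach $210$; so no valid selection exists and these cases are eliminated by a trivial counting check, with no clique search needed.

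The interesting case is $|G|=12$, where $G\cong A_4$ and two actions on the $56$ points arise, namely $\nu^{(1)}$ and $\nu^{(2)}$. Here the number of good orbits ($3\,148\,236$ and $5\,664\,770$ respectively) is far too large to load the full compatibility graph into Cliquer, so the plan is to exploit the divisibility obstruction: $12\nmid 210$, hence any putative $(56,12,9)$ QSD invariant under $G$ must contain at least one block-orbit of size $<12$. These ``short orbits'' are much fewer ($16\,588$ and $53\,954$ respectively). I would iterate over each short orbit $\mathcal{B}$, compute the induced subgraph of the compatibility graph on the orbits compatible with $\mathcal{B}$, and use Cliquer to look for cliques of weight $b-|\mathcal{B}|$ in that subgraph; if no such clique exists for any $\mathcal{B}$, no design can exist.

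The main obstacle is making the per-$\mathcal{B}$ subgraphs small enough that Cliquer terminates in reasonable time. Two things help: first, imposing compatibility with a fixed $\mathcal{B}$ typically cuts the vertex set by orders of magnitude because the intersection constraints with $\mathcal{B}$ are quite restrictive when $x=0$ and $y=3$; second, many short orbits will be related by elements of $N_{S_{56}}(G)$ or by outer automorphisms of $G$, so one can reduce the number of cases up to equivalence. Once all cases are handled and every restricted clique search returns the empty list, one concludes nonexistence for every $G\le G_{48}$ with $|G|\ge 8$, recovering the theorem; the whole computation is reported to fit in a few CPU days.
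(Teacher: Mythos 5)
Your proposal follows essentially the same route as the paper: dispose of the orders $48$, $16$ and $8$ by the scarcity and small size of good orbits, note that $G_{48}$ has no subgroup of order $24$, and for the two $A_4$-actions exploit $12\nmid 210$ to force a short block-orbit, then run a restricted clique search of weight $b-|\mathcal{B}|$ over each short orbit $\mathcal{B}$. One caveat: your claim that a pure weight count eliminates the orders $16$ and $8$ does not close as stated, since $180$ (resp.\ $740$) good orbits of size up to $2$ could in principle supply up to $360$ (resp.\ $1480$) blocks, which exceeds $b=210$; the paper likewise only asserts that these orbits ``cannot be used,'' so in those two cases you still need a (trivially fast) compatibility/clique check on the small candidate pool rather than a counting argument alone.
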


Next, we consider $Frob_{21}$ as an automorphism group of
$(56,12,9)$ QSDs. Again, we want to consider all possible actions on
the $56$ points. To reduce the number of possibilities, we need the
following

\begin{lemma}
An automorphism of order $7$ of a $(56,12,9)$ QSD does not fix any
points and blocks.
\end{lemma}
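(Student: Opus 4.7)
The plan is to mirror the proof of the previous lemma: assume that $\alpha$ fixes $7m$ points (all $\alpha$-orbits on points have length $1$ or $7$), compile a system of (in)equalities on the fixed-point and fixed-block data of $\alpha$, and show that it has no solution for any $m\ge 1$. Once $m=0$ is forced, no fixed blocks exist either, because every fixed block turns out to contain at least $5$ fixed points.

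First I would nail down the local structure. Since $\alpha$ has order $7$, a fixed block $B$ has $e:=|B\cap V_{\mathrm{fix}}|\equiv k=12\pmod{7}$, so $e\in\{5,12\}$; an $e=5$ block consists of $5$ fixed points together with a full $\alpha$-orbit of nonfixed points, while an $e=12$ block lies entirely in $V_{\mathrm{fix}}$. Two fixed blocks meet in $x=0$ or $y=3$ points, and in the $y$-case all three intersection points are fixed. For each pair of fixed points $P,Q$, the $\lambda=9$ blocks through them are $\alpha$-permuted in cycles of length $1$ or $7$, so the number $\lambda_{PQ}$ of fixed blocks through $P,Q$ is $\equiv 9\equiv 2\pmod{7}$, i.e.\ equal to $2$ or $9$. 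Similarly, the number $a_P$ of fixed blocks through a fixed point $P$ satisfies $a_P\equiv r=45\equiv 3\pmod{7}$, so $a_P\in\{3,10,17,24,31,38,45\}$.

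Writing $v_a$ for the number of fixed points with $a_P=a$ and $b_e$ for the number of fixed blocks with $e$ fixed points, standard double counting gives
\begin{equation*}
\textstyle\sum_a v_a=7m,\quad \sum_a a\,v_a=5b_5+12b_{12},\quad 2\binom{7m}{2}\le 10b_5+66b_{12}\le 9\binom{7m}{2},
\end{equation*}
the middle expression equalling $\sum_{P<Q}\lambda_{PQ}$. Two further bounds come from the intersection structure: two $e=5$ blocks sharing the same nonfixed orbit would meet in at least $7$ points, so $b_5\le 8-m$; and Cauchy--Schwarz applied to $\sum_P\binom{r_P}{2}\le 3\binom{b_{12}}{2}$ with $\sum_Pr_P=12b_{12}$ yields the convexity bound $b_{12}\le 63m/(144-21m)$ for $m\le 6$. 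Finally $b_5+b_{12}\equiv 0\pmod{7}$, since $b=210\equiv 0\pmod{7}$ and nonfixed blocks come in orbits of length $7$, and trivially $a_P\le b_5+b_{12}$.

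The verification of infeasibility is then a short case check on $m\in\{1,\ldots,7\}$. For $m=1$ we have $b_{12}=0$, and the bounds above together with $a_P\le 7$ pin down $b_5=7$ and $v_3=7$, making $\sum_a a\,v_a=21\ne 35=5b_5$. For $m=2,\ldots,6$ the convexity bound caps $b_{12}$ at $1,2,4,8,21$ respectively, while the pair-coverage lower bound $66b_{12}\ge 2\binom{7m}{2}-10(8-m)$ forces $b_{12}\ge 2,6,11,18,26$, a direct contradiction in each case. The main obstacle is $m=7$, where the convexity bound degenerates; however, in that case $V\setminus V_{\mathrm{fix}}$ is a single $\alpha$-orbit of $7$ points, so a nonfixed block $B$ would have $|B\cap V_{\mathrm{fix}}|\ge 12-7=5$ and at the same time $|B\cap V_{\mathrm{fix}}|\le|B\cap\alpha B|\le 3$, impossible. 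Hence all $210$ blocks are fixed, so $10b_5+66b_{12}\ge 66\cdot 209=13794>9\binom{49}{2}=10584$, contradicting the upper bound in the third displayed inequality. Thus $m=0$, and the observation that every fixed block would require $\ge 5$ fixed points also gives $b_5=b_{12}=0$.
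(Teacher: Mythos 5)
Your proof is correct, and while it starts from the same skeleton as the paper's (fixed blocks carry $5$ or $12$ fixed points, each $e=5$ block absorbs a distinct nonfixed point-orbit so $b_5\le 8-m$, and pairs of fixed points on fixed blocks are double-counted), it diverges at the decisive step. The paper proves the strong structural fact that \emph{all} $\lambda=9$ blocks through two fixed points must themselves be fixed: it rules out the alternative $\lambda_{PQ}=2$ by a careful analysis of how a length-$7$ block-orbit through $F_1,F_2$ would distribute its third intersection point $T$ and its remaining points over the at most $8-m\le 6$ point-orbits of size $7$. This yields the exact equation $10b_5+66b_{12}=9\binom{7m}{2}$, and a single divisibility check ($b_{12}=\frac{63m(7m-1)-20b_5}{132}$ is never an integer for $2\le m\le 7$, $0\le b_5\le 6$) kills all remaining cases at once. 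You avoid that structural lemma entirely, working only with the congruence $\lambda_{PQ}\in\{2,9\}$, and compensate with two extra ingredients: a Fisher-type convexity bound $b_{12}\le 63m/(144-21m)$ coming from $\sum_P\binom{r_P}{2}\le 3\binom{b_{12}}{2}$, and a separate argument for $m=7$ (where that bound degenerates) showing every block is fixed via $|B\cap V_{\mathrm{fix}}|\le|B\cap\alpha B|\le 3$. I checked your numerics: the caps $1,2,4,8,21$ against the forced lower bounds $2,6,11,18,26$ for $m=2,\dots,6$, the $m=1$ contradiction $21\ne 35$ (using $b_5\equiv 0\pmod 7$), and the $m=7$ count $13794>10584$ all hold. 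The trade-off is that the paper's route needs one delicate combinatorial argument but then finishes uniformly, whereas yours is more elementary pointwise (no analysis of the orbit $\mathcal{B}$) at the cost of more case-by-case arithmetic and the extra Cauchy--Schwarz input; both are complete proofs.
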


\begin{proof}
Let $\alpha$ be an automorphism of order $7$ fixing $7m$ points. A
fixed block contains $5$ or $12$ fixed points; denote the number of
such blocks by $b_5$ and $b_{12}$. If $m=1$, then $b_{12}=0$, $b_5$
is at least $7$, and any pair of fixed blocks intersects in $y=3$
fixed points. This would yield a family of $5$-subsets of the $7$
fixed points pairwise intersecting in $3$ points. Such a family can
have at most $3$ subsets, and thus $m\ge 2$. The number of
point-orbits of size~$7$ is $8-m$, and since every $b_5$-block
contains such an orbit, we have $b_5\le 6$.

Consider the $\lambda=9$ blocks through two fixed points $F_1$ and
$F_2$. Either two of them are fixed and $7$ are in a block-orbit
$\mathcal{B}$, or all $9$ are fixed blocks. A pair of blocks from
$\mathcal{B}$ intersects in the fixed points $F_1$ and $F_2$, and
therefore must have a third intersection point $T$. If $T$ is fixed,
the remaining $k-3=9$ points on a block $B\in \mathcal{B}$ belong to
different point-orbits of size $7$. This is not possible because we
have at most $6$ point-orbits of size $7$. If $T$ is not fixed, a
block $B\in \mathcal{B}$ contains at most $3$ points from each orbit
$\mathcal{O}$ of size~$7$. If $B$ contains $3$ points from one orbit
$\mathcal{O}$, then $\mathcal{O}$ and $\mathcal{B}$ form a Fano
plane and the remaining $k-5=7$ points on $B$ belong to different
point-orbits of size~$7$. The only other possibility is for $B$ to
contain two points from $3$ orbits of size~$7$, and one point from
further $k-8=4$ orbits of size~$7$. Both possibilities would require
more than $6$ point-orbits of size~$7$.

Hence, there are $9$ fixed blocks through $F_1$ and $F_2$ and the
set of fixed points and blocks forms a PBD with parameters
$(7m,\{5,12\},9)$. Double counting triples $(P,Q,B)$ of two fixed
points incident with a fixed block yields
\begin{equation}\label{eq56129}
\textstyle {5\choose 2}b_5+{12\choose 2}b_{12}={7m\choose 2}\cdot 9.
\end{equation}
From \eqref{eq56129}, $b_{12}=\frac{63 m (7 m-1)-20b_5}{132}$ and
this expression is not an integer for $2\le m\le 7$ and $0\le b_5\le
6$. Therefore, $m=0$ and $\alpha$ has no fixed points and blocks.
\end{proof}

We need to consider three possible actions of $Frob_{21}$ on the
$56$ points, with orbit size distributions $\nu^{(1)}$, $\nu^{(2)}$,
$\nu^{(3)}$ as in Section~\ref{sec4}. We computed the orbits of
$12$-element subsets with the algorithm from Section~\ref{sec3}:
\begin{itemize}
\item $\nu^{(1)}$: $5\,824$ good orbits ($26\,589\,705\,660$ total orbits),
\item $\nu^{(2)}$: $459\,550$ good orbits ($26\,589\,687\,420$ total orbits),
\item $\nu^{(3)}$: $53\,578$ good orbits ($26\,589\,683\,340$ total orbits).
\end{itemize}
The compatibility graphs have densities $2.173\cdot 10^{-2}$,
$2.328\cdot 10^{-5}$, and $3.612\cdot 10^{-5}$, respectively. We
used Cliquer to establish that the maximum weight of a clique in the
graphs are $21$, $77$, and $35$. This is less than the required
number of blocks $b=210$.

\begin{theorem}
Quasi-symmetric $(56,12,9)$ designs with an automorphism group
isomorphic to $Frob_{21}$ do not exist.
\end{theorem}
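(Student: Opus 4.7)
The plan is to combine the preceding lemma with a weighted clique search of the same kind used in Section~\ref{sec3}. By the lemma, every non-trivial element of order $7$ in $Frob_{21}$ acts without fixed points or blocks, so every point-orbit of $Frob_{21}$ has size divisible by $7$. The Sylow $7$-subgroup, being normal, therefore acts semiregularly, and the quotient of order $3$ either stabilises a $7$-orbit or cycles three of them; this leaves exactly the three point-orbit distributions $\nu^{(1)}$, $\nu^{(2)}$, $\nu^{(3)}$ already introduced in Section~\ref{sec4}.

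In each of the three cases, I would carry out the pipeline \emph{good orbits $\to$ compatibility graph $\to$ maximum weighted clique}. First, using the orderly algorithm \textsc{GoodOrbits} from Section~\ref{sec3} with $k=12$ and $(x,y)=(0,3)$, enumerate all $Frob_{21}$-orbits of $12$-element subsets whose representatives intersect every image under the group in $0$, $3$, or $12$ points; the sizes of the three resulting pools ($5{,}824$, $459{,}550$, $53{,}578$) are reported above. Second, build the weighted graph whose vertices are these good orbits, with vertex weights equal to orbit sizes and edges joining pairs of orbits that are mutually compatible (every pair of their representatives intersects in $0$ or $3$ points). A $Frob_{21}$-invariant $(56,12,9)$ QSD then corresponds to a clique of total weight exactly $b=210$.

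The final step is to feed each of the three compatibility graphs to Cliquer and compute the maximum weight of any clique. If in every case this maximum is strictly less than $210$, there is no invariant family of size $b$, let alone a balanced one, and the theorem follows without any balancedness check or isomorphism filtering. The main obstacle I anticipate is the size of the graph arising from $\nu^{(2)}$, with nearly half a million vertices; however, its density of $2.328 \cdot 10^{-5}$ is extremely low, and Cliquer's branch-and-bound is known to cope well with sparse weighted graphs in which the weighted clique number is small compared with the target. Since the reported maxima $21$, $77$, and $35$ all fall far short of $210$, the weighted clique computation settles the nonexistence in all three cases and the proof is complete.
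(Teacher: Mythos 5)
Your proposal matches the paper's proof essentially step for step: the lemma forces the three point-orbit distributions $\nu^{(1)}$, $\nu^{(2)}$, $\nu^{(3)}$, the good orbits are enumerated with the orderly algorithm, and Cliquer's maximum clique weights of $21$, $77$, and $35$ in the three weighted compatibility graphs all fall short of $b=210$, so no invariant block set (balanced or not) can exist. This is exactly the argument given in Section~\ref{sec5}, so no further comparison is needed.
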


Finally, we turn to $(56,20,19)$ QSDs with intersection numbers
$x=5$, $y=8$ and $G_{48}$ as automorphism group. Generating orbits
of $20$-element subsets with the algorithm from Section~\ref{sec3}
would take about ${56\choose 20}/{56\choose 16}\approx 19$ times
longer than for the designs with $k=16$, and only a tiny fraction of
the orbits are expected to be good. We can speed up the computation
by adding the condition $|U\cap g(U)|\le y$, for $g\in G\setminus
\{1\}$, to the \textbf{if} statement in line~8. A set $U$
intersecting its image $g(U)$ in more than $y$ points cannot be
extended to a $k$-element representative of a good orbit, unless the
extension $U'\supset U$ belongs to a short orbit. The new condition
causes the algorithm to miss the short good orbits, but we can
generate them quickly by the algorithm from~\cite{KV16}. It took
about $2$ CPU days to generate the $384$ long good orbits (of size
$48$) and a few more minutes for the $3851$ short good orbits (of
size less than $48$). The compatibility graph has $4235$ vertices
and $163\,766$ edges (density $0.01827$). The maximum weight of a
clique is $142$,  less than the required number of blocks $b=154$.

\begin{theorem}\label{tm56-20}
Quasi-symmetric $(56,20,19)$ designs with $G_{48}$ as automorphism
group do not exist.
\end{theorem}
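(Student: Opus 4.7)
The plan is to mirror the clique-search framework used successfully for $(56,16,18)$ QSDs in Section~\ref{sec3}, but adapted to the substantially larger combinatorial problem arising when $k=20$. The design would have $b=154$ blocks with intersection numbers $x=5$ and $y=8$, and any $G_{48}$-invariant block set is a disjoint union of $G_{48}$-orbits of $20$-subsets of $V=\{1,\ldots,56\}$. So the whole question reduces to deciding whether the compatibility graph on good $G_{48}$-orbits contains a clique of weight exactly $b=154$. If even the \emph{maximum} weight clique falls short of $154$, nonexistence follows immediately without having to check balance.

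First I would generate all good $G_{48}$-orbits of $20$-element subsets. Running the orderly \textsc{GoodOrbits} algorithm from Section~\ref{sec3} verbatim is infeasible because $\binom{56}{20}/\binom{56}{16}\approx 19$, so the search tree blows up. The key optimisation is to strengthen the pruning predicate in line~8: during the recursive extension of a partial set $U$, require not only minimality $m(U\cup\{e\})=U\cup\{e\}$ but also $|(U\cup\{e\})\cap g(U\cup\{e\})|\le y=8$ for every $g\in G_{48}\setminus\{1\}$. Any extension of a set violating this inequality to a full $k$-subset $U'$ cannot lie in a regular (size-$48$) good orbit, because within such an orbit the intersection pattern forces $|U'\cap g(U')|\in\{x,y\}$ for all nontrivial $g$. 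The cost is that short orbits (whose stabiliser is nontrivial, so $|U'\cap g(U')|$ can equal $k=20$ for several $g$) may be missed; these I would recover separately via the stabiliser-based short-orbit algorithm from \cite{KV16}, which is cheap since there are few small-index subgroups of $G_{48}$.

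Having generated the good orbits, the next step is to build the compatibility graph: vertices are the good orbits, each weighted by its size, with an edge between two orbits precisely when every cross-pair of representatives meets in $x$ or $y$ points. Then I would invoke Cliquer to compute the maximum weight of a clique. The hoped-for conclusion is that this maximum is strictly less than $154$, which rules out any $G_{48}$-invariant family of $154$ blocks with the required pairwise intersection property, and a fortiori any $(56,20,19)$ QSD with $G_{48}$ as an automorphism group.

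The main obstacle I anticipate is the runtime and memory cost of the orbit enumeration, since even with the strengthened pruning the search space for $k=20$ is dramatically larger than for $k=16$. Correctness also hinges on the claim that the extra inequality in the pruning step loses only short orbits; I would justify this by the observation that if $U'$ is a $k$-subset in a regular good orbit, then for any nontrivial $g\in G_{48}$ we have $|U'\cap g(U')|\in\{x,y\}$, and this bound is inherited by every subset of $U'$. Once orbit generation succeeds, the clique-search phase is expected to be routine: the density should be comparable to the $(56,16,18)$ case, so Cliquer should dispatch the maximum-weight query in manageable time, and the nonexistence statement follows directly from the numerical outcome.
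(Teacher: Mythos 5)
Your proposal follows essentially the same route as the paper: the same strengthened pruning condition $|U\cap g(U)|\le y$ for nontrivial $g$ (with the same justification via monotonicity of intersections under subsets, and the same recovery of the missed short orbits by the stabiliser-based algorithm of~\cite{KV16}), followed by a maximum-weight clique computation in the compatibility graph. The paper's computation confirms the hoped-for outcome: $384$ long and $3851$ short good orbits, a compatibility graph with $4235$ vertices, and a maximum clique weight of $142<154=b$.
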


Assuming a smaller automorphism group, e.g.\ a subgroup of $G_{48}$
or some permutation representation of $Frob_{21}$ on $56$ points,
increases the number of good orbits quite dramatically. We can no
longer perform the second step of the computation, neither by clique
search nor by using orbit matrices.

\section{Conclusion}\label{sec6}

New bounds on numbers of non-isomorphic quasi-symmetric designs on
$56$ points are given in Table~\ref{table5}. The incidence matrices
of the constructed designs can be downloaded from our web page
\begin{center}
{\small
\url{https://web.math.pmf.unizg.hr/~krcko/results/quasisym.html} }
\end{center}
Previously, only two $(56,16,6)$ QSDs~\cite{MT04} and three
$(56,16,18)$ QSDs~\cite{KV16} were known. Almost any approach we
tried for $(56,16,6)$ QSDs increased the number of known designs. On
the other hand, only one new $(56,16,18)$ QSD was found, and the
existence of $(56,12,9)$ and $(56,20,19)$ QSDs remains open.

\begin{table}[!h]\label{table5}
\begin{tabular}{ccccccccc}
\hline
No. & $v$ & $k$ & $\lambda$ & $r$ & $b$ & $x$ & $y$ & NQSD\\
\hline
47 & 56 & 16 & 18 & 66 & 231 & 4 & 8 & $\ge 4$\\
48 & 56 & 15 & 42 & 165 & 616 & 3 & 6 & 0\\
49 & 56 & 12 & 9 & 45 & 210 & 0 & 3 & ?\\
50 & 56 & 21 & 24 & 66 & 176 & 6 & 9 & 0\\
51 & 56 & 20 & 19 & 55 & 154 & 5 & 8 & ?\\
52 & 56 & 16 & 6 & 22 & 77 & 4 & 6 & $\ge 1410$\\
\hline
\end{tabular}
\vskip 3mm \caption{An updated table of QSDs with
$v=56$.}\label{table5}
\end{table}

Regarding computational techniques, the algorithm from
Section~\ref{sec3} solves the problem of generating orbit
representatives of $k$-element subsets satisfactorily. It can be
adapted to generate orbits compatible with an orbit matrix (see
Section~\ref{sec4}), and speed-up is possible when the intersection
number~$y$ is comparatively small (see the proof of
Theorem~\ref{tm56-20}). The main computational problem remains
putting the orbits or individual $k$-subsets together to form QSDs.

The requirement that every pair of $k$-subsets intersects in~$x$
or~$y$ points proved stronger than the requirement that they cover
every $2$-subset exactly $\lambda$ times. We could simply ignore the
second requirement until the end of the computation. In some cases
all the constructed structures were balanced
(Theorem~\ref{d56166g48}), and in others the number of non-balanced
structures was not too large (Theorem~\ref{tm56-16-18}). The
opposite approach would give many more block designs that are not
quasi-symmetric.

The critical factor for the computation is the number of candidates
for blocks of the design (good orbits, or individual $k$-subsets
obtained from codes). The difficulty of the problem also depends on
the number of compatible candidates, i.e.\ candidates intersecting
in $x$ or $y$ points, measured by the density of the compatibility
graph. For low densities we could handle problems with hundreds of
thousands or even millions of candidates
(Theorem~\ref{tm56-12-9g48}), and for higher densities problems with
only a few thousand candidates. The number of candidates can
sometimes be reduced by considering orbit matrices, as in the proof
of Theorem~\ref{tm16-6frob21}. For this approach the number of
blocks of the design must be small enough to allow complete
classification of the orbit matrices.

Despite our efforts in Section~\ref{sec5}, we did not find any
$(56,12,9)$ and $(56,20,19)$ QSDs. If these designs exist, we
believe some new insight or completely different computational
approach will be necessary for their construction.

\end{document}